%% file: main.tex
\documentclass[11pt]{article}
\usepackage[utf8]{inputenc}
\usepackage[T1]{fontenc}
\usepackage{amsmath,amssymb,amsthm}
\usepackage[margin=1.2in]{geometry}
\usepackage{microtype}
\usepackage{url}
\usepackage{booktabs}
\usepackage{array}
\usepackage[hidelinks]{hyperref}

\theoremstyle{plain}
\newtheorem{theorem}{Theorem}[section]
\newtheorem{lemma}[theorem]{Lemma}

\theoremstyle{definition}

\newcommand{\N}{\mathbb{N}}

\newcommand{\Coal}{\sim}

\title{Kernel-Checked Frontier Certificates\\ for Erd\H{o}s Problem 414}
\author{Ibrahim Mian\thanks{Corresponding author. Email: \texttt{ibrahimnmian@gmail.com}.} \qquad Shayaan Siddique\\[2pt]
\normalsize Millennium Research}
\date{}

\begin{document}
\maketitle

\begin{abstract}
Let $h(n)=n+\tau(n)$, where $\tau$ counts divisors. Erd\H{o}s and Graham asked, after Spiro, whether the orbits of any two positive integers under $h$ eventually share a point (Problem~414 on Bloom's list). The problem is open, and a finite computation cannot close it; nothing about it has been checked by a proof kernel, and the statement in the \texttt{formal-conjectures} repository carries a \texttt{sorry}. We replay what can be certified through the Lean~4 kernel under an axiom gate (axioms exactly \texttt{propext}, \texttt{Classical.choice}, \texttt{Quot.sound}; no \texttt{sorry}; no \texttt{native\_decide}). Three elementary facts recorded by Li---coalescence is an equivalence relation, $\tau(n)$ is odd exactly for squares, and $\tau(n)$ is bounded by $2\sqrt n$ so that orbits skip no square annulus $[k^2,(k+1)^2)$---are formalized, and his frontier lemma is formalized in a window form: every orbit started below a level $N$ has a point within $\lceil 2\sqrt N\rceil$ below $N$. The frontier lemma becomes a certificate: the $\lceil 2\sqrt N\rceil$ values $\tau(m)$ just below $N$ and the orbit points above $N$ until the orbits through the crossing points have merged. Its soundness is a kernel-checked theorem, and the certified rungs state that every pair of positive integers below $N$ coalesces for $N=10^5,10^6,10^7,10^8$; the certificate at $10^8$ has $44{,}530$ entries rather than $10^8$. Every ledger and every quoted count was produced by two programs sharing no code that agree by hash. We record what is known about a non-coalescing pair and measure the exit-set sizes Li bounds. Nothing here is a proof of the conjecture.
\end{abstract}

\section{The problem, and what this paper does not do}
\label{sec:intro}

Problem~414 on Bloom's list \cite{EP414} asks, following Spiro (see \cite[p.~82]{ErGr80}): with $h(n)=n+\tau(n)$ and $h^{[k]}$ its $k$-th iterate, is it true that for any $m,n$ there are $i,j$ with $h^{[i]}(m)=h^{[j]}(n)$? (Li \cite{Li26} writes $T$ for the map.) Erd\H{o}s and Graham believed the answer is yes. The orbit of $1$ is OEIS A064491 \cite{OEIS}, whose entry cites Spiro's 1977 West Coast Number Theory problem and, in a 2013 note by Sloane, poses two questions: whether every starting value joins this sequence, and whether the parity of its terms changes infinitely often. The problem is open. A disproof is not a finite object (two orbits that never meet), so no computation settles it, and nothing below claims to.

What is certified is smaller. Li \cite{Li26} reformulated the problem through the graph with edges $n \leftrightarrow h(n)$ and recorded, among analytic estimates we do not reproduce, three elementary facts and a frontier lemma. We formalize the facts (Section~\ref{sec:elementary}) and the lemma in a window form, turn it into a checkable certificate with a kernel-checked soundness theorem (Section~\ref{sec:cert}), and certify that every pair of positive integers below $10^8$ coalesces (Section~\ref{sec:rungs}). Section~\ref{sec:checklist} records what is known about a counterexample; Section~\ref{sec:measure} reports measurements; Section~\ref{sec:remains} says what remains.

Two tiers are kept apart throughout. \emph{Kernel-checked} means a Lean~4 theorem in the accompanying repository whose axiom closure lies within the set of \texttt{propext}, \texttt{Classical.choice} and \texttt{Quot.sound} (for every theorem named in this paper it is exactly that set), checked by a mechanical audit of every theorem in every module; the build and audit logs are in the repository. \emph{Cited} means a published statement we use as stated. Li's numbering is always prefixed by his name; unprefixed numbers are ours.

\section{Elementary facts, kernel-checked}
\label{sec:elementary}

Write $a\Coal b$ if $h^{[i]}(a)=h^{[j]}(b)$ for some $i,j\ge 0$ (Lean: \texttt{Coal}). The definition of $h$ is taken verbatim from the statement of the problem in the \texttt{formal-conjectures} repository \cite{FC}, so the rung theorems below are about the object that statement is about. Table~\ref{tab:leanmap} maps every statement of \cite{Li26} we formalize to its Lean name. The Lean names in the table are generated from the sources by a script and checked to exist; Li's numbering is entered by hand and was checked against his text.

\begin{table}[h]
\centering
\input{leanmap}
\caption{Statements of \cite{Li26} and the Lean theorems establishing the forms stated here (one-step parity, integer square-root bound, window form of the frontier lemma). The last two rows are ours.}
\label{tab:leanmap}
\end{table}

\begin{lemma}[Li \cite{Li26}, Lemma 2.1]
\label{lem:equiv}
$\Coal$ is an equivalence relation on $\N$.
\end{lemma}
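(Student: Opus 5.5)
Reflexivity and symmetry are immediate from the definition: $a\Coal a$ with $i=j=0$, and a witness $(i,j)$ for $a\Coal b$ becomes the witness $(j,i)$ for $b\Coal a$. The content is transitivity, and the plan is to get it from the fact that $h$ is a function, so an orbit is a single forward chain. Once two orbits share a point, every later iterate of that point lies on both. In Lean this is the iterate-composition identity $h^{[k]}\circ h^{[i]}=h^{[k+i]}$ (\texttt{Function.iterate\_add\_apply}).

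Transitivity goes as follows. Suppose $h^{[i]}(a)=h^{[j]}(b)$ and $h^{[k]}(b)=h^{[l]}(c)$. Both points lie on the orbit of $b$, at positions $j$ and $k$. I will push both equalities forward to the common position $\max(j,k)$ on that orbit. Applying $h^{[k]}$ to the first equality gives $h^{[i+k]}(a)=h^{[j+k]}(b)$. Applying $h^{[j]}$ to the second gives $h^{[j+k]}(b)=h^{[j+l]}(c)$. Chaining these yields $h^{[i+k]}(a)=h^{[j+l]}(c)$, so $a\Coal c$. Padding by the other index instead of taking a maximum avoids any case split, which keeps the formal proof short.

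Nothing here uses the specific form $h(n)=n+\tau(n)$, nor injectivity or monotonicity; the argument holds for any self-map of $\N$. The only care needed is formal bookkeeping. Natural-number addition must be commuted so that $j+k$ and $k+j$ match, and $h^{[0]}(0)=0$ must not cause trouble, since the relation is stated on all of $\N$ including $0$. The definition is existential, so rewriting the equality under $h^{[k]}$ via \texttt{congrArg} and then using \texttt{iterate\_add\_apply} should close it. The result can be packaged as an \texttt{Equivalence} structure.
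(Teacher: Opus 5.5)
Your proof is correct and is the paper's argument: both apply $h^{[k]}$ to the first equality and $h^{[j]}$ to the second so that they meet at $h^{[j+k]}(b)$, which in Lean is rewriting with \texttt{Function.iterate\_add\_apply} plus one \texttt{add\_comm}. Your mention of $\max(j,k)$ is a leftover, since the common position you actually use is $j+k$.
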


\begin{proof}
Reflexivity and symmetry are immediate. For transitivity, from $h^{[i]}(a)=h^{[j]}(b)$ and $h^{[k]}(b)=h^{[l]}(c)$ one has $h^{[k+i]}(a)=h^{[k]}(h^{[j]}(b))=h^{[j+k]}(b)=h^{[j]}(h^{[l]}(c))=h^{[j+l]}(c)$; in Lean this is four rewrites with \texttt{Function.iterate\_add\_apply} and one with \texttt{add\_comm} (\texttt{coal\_trans}).
\end{proof}

\begin{lemma}[classical; Li \cite{Li26}, \S 6]
\label{lem:parity}
For $n\ge 1$, $\tau(n)$ is odd if and only if $n$ is a square. Hence $h(n)\equiv n \pmod 2$ unless $n$ is a square, and $h(n)\not\equiv n \pmod 2$ if it is.
\end{lemma}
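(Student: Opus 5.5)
The plan is the classical divisor-pairing argument, followed by a one-line parity consequence. For $n\ge 1$, consider the involution $d\mapsto n/d$ on the set of positive divisors of $n$. Its fixed points are the divisors $d$ with $d^2=n$, so there is exactly one fixed point if $n$ is a square and none otherwise. The remaining divisors split into two-element orbits $\{d,n/d\}$ with $d\ne n/d$. Counting, $\tau(n)$ equals twice the number of such pairs plus the number of fixed points. Hence $\tau(n)$ is odd exactly when a fixed point exists, that is, exactly when $n$ is a square. The second sentence then follows from $h(n)-n=\tau(n)$: $h(n)\equiv n\pmod 2$ iff $\tau(n)$ is even iff $n$ is not a square.

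For the Lean formalization I would avoid building the involution on a \texttt{Finset} by hand. Instead I would split the divisor set of $n$ into three disjoint filtered pieces: $\{d : d^2<n\}$, $\{d : d^2=n\}$ and $\{d: d^2>n\}$. The map $d\mapsto n/d$ gives a bijection between the first and third pieces, which I would establish with \texttt{Finset.card\_bij} (or \texttt{card\_nbij'} with the inverse being the same map), using $d\cdot(n/d)=n$ for $d\mid n$. The middle piece has cardinality $1$ or $0$ according as $n$ is a square, by uniqueness of square roots in $\N$. This yields $\tau(n)=2a+[\,n \text{ is a square}\,]$, and the parity claim is then a matter of \texttt{Nat.odd\_iff} together with \texttt{omega}.

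If Mathlib already supplies a lemma relating odd divisor count to squares (something like \texttt{Nat.card\_divisors} parity, or via multiplicativity $\tau=\prod(e_i+1)$), I would use it directly rather than repeat the argument. The multiplicative route also works on paper: the product $\prod(e_i+1)$ is odd iff every exponent $e_i$ is even iff $n$ is a square. I do not take this as primary because it requires factorization machinery.

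The main obstacle is the bookkeeping in the bijection step: showing that $n/d$ lands in the opposite piece (from $d^2<n$ deduce $(n/d)^2>n$, using exact division) and that the map is injective on divisors. I would handle this by rewriting $n=d\cdot e$ and reasoning about $d<e$ via \texttt{Nat.mul\_self\_lt\_mul\_self\_iff} and \texttt{nlinarith}. Everything after that is routine.
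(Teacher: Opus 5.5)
Your proposal is correct and is essentially the paper's proof: the paper splits the divisors into the classes $d^2<n$, $d^2=n$, $d^2>n$, matches the outer two by $d\mapsto n/d$ via \texttt{Finset.card\_nbij'} with \texttt{Nat.div\_div\_self} as the two-sided inverse, and then reduces $n+\tau(n)$ modulo $2$. The one difference is your fallback to an existing Mathlib lemma; the paper reports finding none for the parity of $\tau$, so the hand-built bijection is the argument actually formalized.
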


\begin{proof}
Split the divisors of $n$ into those $d$ with $d^2<n$, those with $d^2>n$, and the at most one $d$ with $d^2=n$. The map $d\mapsto n/d$ is a bijection between the first two classes (\texttt{Finset.card\_nbij'} with \texttt{Nat.div\_div\_self} as the two-sided inverse), so $\tau(n)$ is twice the size of the first class plus the size of the third, which is $1$ when $n$ is a square and $0$ otherwise (\texttt{odd\_card\_divisors\_iff}). The two parity statements follow by reducing $n+\tau(n)$ modulo $2$ (\texttt{h\_eq\_of\_parity}, \texttt{h\_ne\_of\_parity\_square}). We found no Mathlib lemma for the parity of $\tau$. Consequently two orbits of opposite parity can share a point only after one of them has passed through a square; that iterated statement is Li's Proposition~6.2, which we formalize only in the one-step form above.
\end{proof}

\begin{lemma}[Li \cite{Li26}, Lemma 4.1, integer form]
\label{lem:sqrt}
$\tau(n)\le 2\lfloor\sqrt n\rfloor$, and $h(n) < (\lfloor\sqrt n\rfloor+2)^2$.
\end{lemma}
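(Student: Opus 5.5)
Write $s=\lfloor\sqrt n\rfloor$, so $s^2\le n<(s+1)^2$. I will reuse the pairing from the proof of Lemma~\ref{lem:parity}. The divisors of $n$ split into the small ones $d$ with $d^2<n$, the large ones with $d^2>n$, and at most one $d$ with $d^2=n$. The map $d\mapsto n/d$ is a bijection between small and large divisors. It is more convenient here to pair the divisors with $d\le s$ against those with $d>s$. For $d\mid n$, $d\le s$ holds iff $d^2\le n$: since $d$ is an integer, $d\le\lfloor\sqrt n\rfloor$ iff $d\le\sqrt n$. The same map $d\mapsto n/d$ sends divisors with $d^2\le n$ onto divisors with $(n/d)^2\ge n$. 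So every divisor is either at most $s$, or of the form $n/d$ with $d\le s$.

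This gives an injection-free counting bound. The set of divisors is contained in $A\cup \{n/d : d\in A\}$, where $A=\{d\in[1,s]: d\mid n\}$. Hence $\tau(n)\le 2|A|\le 2s$. In Lean I would state this as \texttt{Nat.divisors n} being a subset of $A\cup A.\mathrm{image}(n/\cdot)$. I would then apply \texttt{Finset.card\_union\_le} and \texttt{Finset.card\_image\_le}, and bound $|A|$ by the cardinality of \texttt{Finset.Icc 1 s}. The key membership step is as follows. If $d\mid n$ and $d>s$, then $e=n/d$ satisfies $e\cdot d=n<(s+1)^2\le d^2$, so $e<d$. Moreover $e\le s$, since otherwise $e\ge s+1$ and $d\ge s+1$ would give $n=ed\ge(s+1)^2$, a contradiction. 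Then $d=n/e$ by \texttt{Nat.div\_div\_self}. The facts $s^2\le n<(s+1)^2$ come from \texttt{Nat.sqrt\_le'} and \texttt{Nat.lt\_succ\_sqrt'}.

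The second claim is arithmetic. We have $h(n)=n+\tau(n)\le (s+1)^2-1+2s=s^2+4s<(s+2)^2=s^2+4s+4$. In Lean this is \texttt{omega} or \texttt{nlinarith} applied to the two square-root inequalities and the first bound.

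I expect the main obstacle to be the divisor-covering step, not the arithmetic. It requires handling the truncated natural-number division $n/d$ and showing that the complementary divisor of a large divisor is small. This must be done without real square roots, using only the integer inequalities for $s$. Everything else is routine. I note that the bound does not use $n\ge1$ except to avoid $n=0$, where \texttt{Nat.divisors 0} is empty and the claims hold trivially.
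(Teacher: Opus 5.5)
Your proof is correct. It rests on the same pairing $d\leftrightarrow n/d$ as the paper's, but you organize the count differently.

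The paper reuses the exact three-class split from Lemma~\ref{lem:parity}. There $\tau(n)$ is twice the number of divisors $d$ with $d^2<n$, plus $1$ exactly when $n$ is a square. It then places that small class in $[1,s]$, or in $[1,s-1]$ when $n$ is a square. This needs a case split on squareness (via \texttt{Nat.sqrt\_eq}), and it yields the slightly sharper $\tau(n)\le 2s-1$ for squares as a by-product.

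You instead cover the divisors by $A\cup\{n/d: d\in A\}$, with $A$ the divisors in $[1,s]$. This counts $\sqrt n$ twice when $n$ is a square, which costs nothing against the bound $2s$. It needs only an inclusion, \texttt{Finset.card\_union\_le} and \texttt{Finset.card\_image\_le}. It uses no bijectivity, no case split, and nothing from the parity lemma, so your route is the more self-contained one. The paper's route is cheaper once the exact split already exists in the repository.

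Your second step is the paper's arithmetic. Of your two tactics, rely on \texttt{nlinarith}, which is what the paper uses. \texttt{omega} will not relate $(s+1)^2$ to $(s+2)^2$ unless the squares are expanded first.
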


\begin{proof}
With the same split, the first class lies in $[1,\lfloor\sqrt n\rfloor]$, and in $[1,\lfloor\sqrt n\rfloor-1]$ when $n$ is a square, since $d^2<n$; the bound follows in both cases (\texttt{card\_divisors\_le\_two\_mul\_sqrt}, via \texttt{Nat.le\_sqrt} and \texttt{Nat.sqrt\_eq}). Then $h(n)\le n+2\lfloor\sqrt n\rfloor<(\lfloor\sqrt n\rfloor+1)^2+2\lfloor\sqrt n\rfloor<(\lfloor\sqrt n\rfloor+2)^2$, using $n<(\lfloor\sqrt n\rfloor+1)^2$ (\texttt{Nat.lt\_succ\_sqrt}); the arithmetic is discharged by \texttt{nlinarith} (\texttt{h\_lt\_sqrt\_add\_two\_sq}). Li states $h(n)<(\sqrt n+1)^2$ with the real square root; in the integer form the $+2$ is needed, since $h(12)=18\ge(3+1)^2$.
\end{proof}

The last lemma gives the window bound used below: if $4N\le W^2$ and $m<N$ then $\tau(m)<W$ (\texttt{card\_divisors\_lt\_of\_lt}), because $(2\lfloor\sqrt m\rfloor)^2\le 4m<4N\le W^2$.

\section{The frontier certificate}
\label{sec:cert}

Fix $N$ and $W$ with $4N\le W^2$ and $W<N$. Every positive $a<N$ has an orbit point in the window $[N-W,N)$: if $a<N-W$ then $h(a)=a+\tau(a)<N-W+W=N$ by the window bound, and $h(a)>a$, so descending induction on $N-a$ reaches the window (\texttt{exists\_iterate\_mem\_window}). Hence if every window element coalesces with one integer $t$, so does every positive $a<N$ (\texttt{coal\_of\_window}). For $m<N$ with $h(m)\ge N$ call $h(m)$ a crossing point of $N$; Li's crossing set $F_N$ is the set of crossing points, and the last orbit point below $N$ of any start lies in the window, so its crossing point is the successor of a window element.

A certificate lists $(m,\tau(m))$ for every $m$ in the window in increasing order, then $(x,\tau(x))$ for orbit points $x\ge N$ in increasing order, and a target $t$. The checker \texttt{certOk} replays it in two phases. In the window phase, each listed $m$ must be the expected next integer; its successor $m+\tau(m)$ is inserted into the pending list if it is at least $N$ (the checker's insertion keeps the list sorted and duplicate-free; the soundness proof uses only membership), and ignored otherwise (it is then a later window element). In the above phase, each listed $x$ must equal the head of the pending list; it is removed and $x+\tau(x)$ inserted. The replay passes if the window has exactly $W$ entries and exactly $[t]$ remains.

\begin{theorem}[\texttt{coal\_below\_of\_cert}]
\label{thm:cert}
Let $4N\le W^2$ and $W<N$. If the listed $\tau$ values, for the window and for the nodes above, are correct and the replay passes, then $a\Coal b$ for all $0<a,b<N$.
\end{theorem}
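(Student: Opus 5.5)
The plan is to reduce the theorem to \texttt{coal\_of\_window}: it suffices to show that every element $m$ of the window $[N-W,N)$ satisfies $m\Coal t$. Given this, every positive $a<N$ coalesces with $t$, and so does $b$. Symmetry and transitivity from Lemma~\ref{lem:equiv} then give $a\Coal b$. The hypotheses $4N\le W^2$ and $W<N$ are used only to invoke \texttt{coal\_of\_window}, whose own proof rests on \texttt{exists\_iterate\_mem\_window}. A small point to check is that window elements are positive, which follows from $W<N$.

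The core is an invariant maintained through the replay. I would define it as follows: at each stage, every window element processed so far either coalesces with a member of the current pending list, or has its successor $m+\tau(m)<N$ equal to a window element not yet processed. The second alternative is a forward reference that is resolved later. To avoid tracking it, I would argue by descending induction over the window instead. If $m+\tau(m)<N$, then $m+\tau(m)$ is a strictly larger window element, so $m\Coal m+\tau(m)$ reduces to the larger case. If $m+\tau(m)\ge N$, then $m+\tau(m)$ was inserted into pending, since the listed $\tau$ values are correct and membership is all we need.

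So the key lemma concerns the above phase, and I would state it over the list of above-entries by induction on that list. The statement: if $P$ is the pending list and the replay of the remaining entries from $P$ ends in exactly $[t]$, then every $x\in P$ satisfies $x\Coal t$. In the step case, the head $x$ is removed and $x+\tau(x)=h(x)$ is inserted, giving $P'$. By induction every member of $P'$ coalesces with $t$. The other members of $P$ lie in $P'$, because insertion preserves membership. For $x$ itself, $x\Coal h(x)$ holds via $i=1,j=0$, and $h(x)\in P'$, so transitivity finishes. In the base case $P=[t]$ and reflexivity applies.

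The window phase is also handled by list induction. It shows that the pending list produced after the window contains $h(m)$ for each window $m$ with $h(m)\ge N$, and that the listed keys are exactly $N-W,\dots,N-1$; the latter follows from the "expected next integer" check together with the count of exactly $W$ entries.

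I expect the main obstacle to be bookkeeping rather than mathematics. The pending list is threaded through two phases, and the invariant must be phrased so that it quantifies over all later states; in particular, I need that membership in pending is monotone under insertion during the window phase. I would first prove a generic lemma, namely that \texttt{insert} preserves old members and adds the new one, and then state both phase lemmas purely in terms of membership. This keeps the sortedness and deduplication entirely out of the proof.
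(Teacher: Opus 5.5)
Your proposal is correct and follows the paper's proof essentially step for step: an above-phase invariant by induction along the listed nodes (a removed head $x$ has $h(x)=x+\tau(x)$ in the next pending list), then a window-phase descending induction splitting on $h(m)\ge N$ versus $h(m)<N$, then the window lemma and transitivity via Lemma~\ref{lem:equiv}. One correction: $W<N$ is not used only to invoke \texttt{coal\_of\_window}, since the paper uses it in the window phase, where $m\ge N-W\ge 1$ gives $\tau(m)\ge 1$ and hence makes $h(m)<N$ a strictly later window element — this is the positivity point you flagged.
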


\begin{proof}
Two invariants. Above phase (\texttt{abovePend\_sound}): if every integer in the final pending list coalesces with $t$, so does every integer ever in it, by induction along the listed nodes, since a removed $x$ satisfies $h(x)=x+\tau(x)$, which lies in the next pending list. Window phase (\texttt{winPend\_sound}): by induction from the last window element downward, each $m$ either has $h(m)\ge N$, in which case $h(m)$ was inserted and coalesces with $t$ by the above invariant, or has $h(m)<N$, in which case $h(m)$ is a later window element already handled; here $\tau(m)\ge 1$ needs $m\ge1$, which is where $W<N$ enters. The window lemma then covers all positive $a<N$, and $a\Coal t\Coal b$ gives $a\Coal b$ by Lemma~\ref{lem:equiv}.
\end{proof}

Each $\tau$ value is established from a factorization certificate: a list of pairs $(p,e)$ with strictly decreasing prime heads and product $n$ gives $\tau(n)=\prod(e+1)$ (\texttt{tau\_of\_pairs}, by \texttt{Nat.Coprime.card\_divisors\_mul} and \texttt{Nat.divisors\_prime\_pow}); the primality of each head is re-proved by \texttt{norm\_num}, the product and the exponent count by \texttt{rfl}, and the replay itself is evaluated by the kernel (\texttt{decide +kernel}). Nothing in the certificate is trusted: a wrong $\tau$ fails the per-node theorem, and a wrong replay fails the kernel evaluation.

This is Li's Lemma~3.1 and Corollary~3.2 with the crossing set $F_N$ replaced by a window that needs no prior knowledge of which starts cross $N$; the window is $\lceil 2\sqrt N\rceil$ wide by Lemma~\ref{lem:sqrt}.

\section{Certified rungs}
\label{sec:rungs}

\begin{theorem}[\texttt{coal\_below\_1e5}, \dots, \texttt{coal\_below\_1e8}]
\label{thm:rungs}
For $N\in\{10^5,10^6,10^7,10^8\}$ and all $0<a,b<N$, $a\Coal b$.
\end{theorem}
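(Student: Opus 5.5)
The plan is to apply Theorem~\ref{thm:cert} once per rung, with $W=\lceil 2\sqrt N\rceil$. For $N=10^{2k}$ this is exactly $W=2\cdot 10^{k}$, so $W^2=4N$ and $W<N$ hold with equality and strict inequality respectively; both checks are closed numerals, discharged by \texttt{norm\_num} or \texttt{decide}. For $N=10^5$ and $10^7$, which are not squares, I would take $W=\lceil 2\sqrt N\rceil$, namely $633$ and $6325$, and verify $4N\le W^2$ numerically in the same way. It then remains, for each $N$, to produce a certificate and discharge the two hypotheses of the theorem: that the listed $\tau$ values are correct, and that \texttt{certOk} passes.

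\textbf{Building the certificate (outside the kernel).} First I would list the window $[N-W,N)$ with each $\tau(m)$. Then I would run the orbits of the crossing points $m+\tau(m)\ge N$, always advancing the smallest pending point. I would stop when a single point $t$ is pending. This requires the orbits through the crossing points to have actually merged, which is an empirical fact. For each listed node I would record its factorization as a list $(p,e)$ with strictly decreasing primes. Two independent programs would emit the certificate, and it would be accepted only if their hashes agree.

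\textbf{Kernel side.} For each node $n$, \texttt{tau\_of\_pairs} proves $\tau(n)=\prod(e+1)$. The hypotheses of that lemma are discharged as follows: \texttt{norm\_num} proves primality of each head, and \texttt{rfl} proves the product identity and the exponent count. Collecting these gives the correctness hypothesis of Theorem~\ref{thm:cert} as a conjunction over the list. The replay hypothesis is a single \texttt{decide +kernel} on \texttt{certOk}. Theorem~\ref{thm:cert} then gives $a\Coal b$ for all $0<a,b<N$.

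\textbf{Main obstacle.} The hardest step is scale, not logic. At $10^8$ the certificate has $44{,}530$ entries, so there are tens of thousands of per-node lemmas plus one large kernel evaluation. Nat arithmetic in the kernel is GMP-accelerated, so the replay itself should be feasible. Primality proofs by \texttt{norm\_num} for primes up to about $10^8$ are also fine. The risk is elaboration time and memory for the per-node lemmas. I would first split the certificate into chunks stored in separate modules, proving a lemma per chunk that all its $\tau$ values are correct. I would then bundle these into a \texttt{List.Forall} fact, so that no single declaration becomes too large. If \texttt{decide +kernel} on the full replay proves too slow, I would instead prove that \texttt{certOk} factors as a fold, and check it segment by segment, threading the intermediate pending lists as explicit checkpoints.
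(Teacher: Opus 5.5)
Your proposal is correct and follows essentially the same route as the paper: apply Theorem~\ref{thm:cert} with $W=\lceil 2\sqrt N\rceil\in\{633,2000,6325,20000\}$, discharge the $\tau$ hypotheses per node via \texttt{tau\_of\_pairs} (primality by \texttt{norm\_num}, product and exponent count by \texttt{rfl}), and discharge the replay by \texttt{decide +kernel} on a hash-cross-checked ledger. Your chunking and checkpointed-fold fallbacks are extra engineering that the paper does not need for these four rungs.
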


\begin{table}[h]
\centering
\begin{tabular}{@{}rrrrr@{}}
\toprule
$N$ & $W$ & nodes above $N$ & $|F_N|$ & target \\
\midrule
$10^5$ & 633 & 903 & 11 & 104494 \\
$10^6$ & 2000 & 410 & 11 & 1002242 \\
$10^7$ & 6325 & 2844 & 12 & 10018018 \\
$10^8$ & 20000 & 24530 & 13 & 100180126 \\
\bottomrule
\end{tabular}
\caption{Certificate sizes: the window width $W=\lceil 2\sqrt N\rceil$, the number of orbit points $x\ge N$ listed, the number $|F_N|$ of crossing points of $N$, and the target, the integer that remains. The pending list never held more than 13 integers.}
\label{tab:rungs}
\end{table}

A certificate at level $N$ has $\lceil 2\sqrt N\rceil$ window entries plus the merge depth above $N$, so the certified frontier is not tied to computing every orbit below $N$: the $10^8$ ledger was produced by trial division on $44{,}530$ integers in seconds, by a C and a Python program sharing no code (the certificate-only pair). The same pair produces a $10^9$ ledger ($W=63{,}246$, $252{,}486$ nodes above, $20$ crossing points) in a minute; that ledger is distributed and verified by trial division, and it is not certified, since its Lean build is projected at about a day of one core. Each of the four certified ledgers was produced by two programs sharing no code (a C sieve and a numpy pointer-doubling implementation for $N\le10^7$; the certificate-only pair for every level) that agree by SHA-256, and re-checked by a third program before being laid out as Lean.

\section{What is known about a counterexample}
\label{sec:checklist}

Suppose $a\not\Coal b$ for positive $a,b$.
\begin{enumerate}
\item At least one of $a$, $b$ lies in a component containing no positive integer below $10^8$ (Theorem~\ref{thm:rungs}, kernel-checked).
\item If $a$ and $b$ have opposite parity, then a square lies on one of the two orbits; equivalently, a pair of opposite parity with no square on either orbit does not coalesce, and a coalescing pair of opposite parity meets only after a square (one-step form kernel-checked, \texttt{h\_eq\_of\_parity}; the iterated form is Li's Proposition~6.2, cited).
\item The number of components meeting $[1,X]$ is at most $\log X+2\gamma+O(X^{-1/4})$, so the least elements of distinct components are exponentially sparse; this constrains the set of components, not the pair (Li, Theorem~3.4 and Corollary~3.7; cited).
\item Each of $a,b$ whose orbit misses the orbit of $1$ (at least one does) determines an infinite path in Li's thin annular graph that avoids the main branch at every sufficiently large level (Li, Proposition~5.3; cited).
\end{enumerate}

\section{Measurements}
\label{sec:measure}

Li's exit sets $E_k=\{\tau(k^2-j)-j : 1\le j\le 2k-2,\ \tau(k^2-j)\ge j\}$, the active-deficit counts $a_k=\#\{1\le j\le 2k-2 : \tau(k^2-j)\ge j\}$ (so $|E_k|\le a_k$), his annular transfer map $A_k$ (for an offset $r$, $A_k(r)=x-(k+1)^2$ where $x$ is the first point of the orbit of $k^2+r$ at or beyond $(k+1)^2$; Li, Definition~4.2), the one-step widths $|A_k(E_k)|$, and the confluence sets $W_{k,0}=E_k$, $W_{k,s}=A_{k+s-1}(W_{k,s-1})$ (Li, Definition~5.7; not to be confused with the window width $W$) were computed for $2\le k\le 5000$ by two programs sharing no code, agreeing by hash. Li's Corollary~5.10 says that $\liminf_k|A_k(E_k)|=1$ would imply the conjecture, and his Remark~8.15 proposes $K\log K$ as the scale of $\sum_{k\le K}a_k$. Measured, the ratio $\sum_{k\le K}a_k/(K\log K)$ moves from $1.69$ at $K=10$ to $1.90$ at $K=5000$, and $\sum_{k\le K}|E_k|/(K\log K)$ from $1.30$ to $1.56$. One-step collapse $|A_k(E_k)|=1$ occurs for $k=2,3,5,315$ and no other $k\le 5000$. For every $k\le 5000$ some $s$ has $|W_{k,s}|=1$; the least such $s$ has mean $10.7$ and median $8$. These are measurements, not theorems.

\section{What remains}
\label{sec:remains}

The conjecture is as open as before. The rungs certify a finite statement. Li's finite-level criteria (his Theorem~4.4 and Corollaries~5.5 and~5.10) are equivalences or sufficient conditions he proves; none of them has been shown to hold, and this paper does not attempt any of them.

\section*{Acknowledgment}
The elementary statements formalized in Section~\ref{sec:elementary} and the frontier lemma behind Section~\ref{sec:cert} are recorded in Li's paper \cite{Li26}; his numbering is given beside each Lean theorem. This work was carried out without correspondence with him.

\section*{Code and data}
% Zenodo DOI: to be added on upload day (docs/ARXIV_SUBMISSION.md, step 2)
The Lean sources, scripts, ledgers and the recorded build logs are in the repository \emph{hastatus}, release \texttt{v1}, compiled by Lean~4 (v4.30.0) against Mathlib (revision \texttt{c5ea003}) under the axiom gate of Section~\ref{sec:intro}; the tag fixes the commit. The repository is at
\begin{center}\url{https://github.com/ibrahimmian36/hastatus}\end{center}
\begin{samepage}
All five frontier ledgers and the exit-set ledger are distributed; the $10^9$ ledger is also reproduced by either certificate-only program in under a minute, and its SHA-256 (one string, shown on two lines) is
\begin{center}\small
\texttt{b5c113503224dcbcc8aa812d905818f7}\\
\texttt{a6a3bd03e6a9d3fec614f337ec01da6b}
\end{center}
\end{samepage}

\section*{Statements}
\paragraph{Disclosure.} The authors report there are no competing interests to declare.

\paragraph{Funding.} This work was supported by 3kVC and Pareto through Millennium Research.

\paragraph{Declaration of generative AI use.}
Large language models---Anthropic's Claude (Fable 5.1)---were used throughout this work:
to explore and draft arguments, to draft and revise the text, to
write the programs in the artifact, and, in separate sessions
without access to earlier reviews, to review the manuscript for
errors.  The authors directed the work, checked every proof, and
take full responsibility for the content.  Every number quoted in
the paper that arises from our own computations is reproduced by
a named program of the artifact, and the artifact contains that
program's recorded output.

\end{document}

%% file: leanmap.tex
% generated by scripts/lean_index.py --tex; do not edit
\small
\begin{tabular}{@{}>{\raggedright\arraybackslash}p{0.34\textwidth}>{\raggedright\arraybackslash}p{0.62\textwidth}@{}}
\toprule
Statement (numbering of \cite{Li26}) & Lean name(s) \\
\midrule
Lemma 2.1 (equivalence relation) & \texttt{coal\_refl}, \texttt{coal\_symm}, \texttt{coal\_trans} \\
\S 6, unnumbered remark: parity of $\tau$ (classical; one-step form) & \texttt{odd\_card\_divisors\_iff}, \texttt{h\_eq\_of\_parity}, \texttt{h\_ne\_of\_parity\_square} \\
Lemma 4.1 (integer form) & \texttt{card\_divisors\_le\_two\_mul\_sqrt}, \texttt{h\_lt\_sqrt\_add\_two\_sq} \\
Lemma 3.1 / Cor. 3.2 (window form) & \texttt{exists\_iterate\_mem\_window}, \texttt{coal\_of\_window}, \texttt{coal\_below\_of\_cert} \\
(ours) certificate & \texttt{tau\_of\_pairs}, \texttt{certOk} \\
(ours) rungs & \texttt{coal\_below\_1e5}, \texttt{coal\_below\_1e6}, \texttt{coal\_below\_1e7}, \texttt{coal\_below\_1e8} \\
\bottomrule
\end{tabular}

%% file: main.bbl
\begin{thebibliography}{9}
\bibitem{EP414} T.~F. Bloom, Erd\H{o}s Problem \#414, \url{https://www.erdosproblems.com/414}, accessed 2026-09-09.
\bibitem{ErGr80} P.~Erd\H{o}s and R.~L. Graham, \emph{Old and new problems and results in combinatorial number theory}, Monographies de L'Enseignement Math\'ematique 28, Gen\`eve, 1980.
\bibitem{FC} Google DeepMind, \emph{formal-conjectures}: a collection of formalized conjectures in Lean, \url{https://github.com/google-deepmind/formal-conjectures}, file \texttt{FormalConjectures/ErdosProblems/414.lean}, accessed 2026-09-09.
\bibitem{Li26} E.~Li, Square-annular dynamics and coalescence frontiers for $n+\tau(n)$, arXiv:2606.17926, 2026.
\bibitem{OEIS} OEIS Foundation Inc., Entry A064491 in \emph{The On-Line Encyclopedia of Integer Sequences}, \url{https://oeis.org/A064491}, accessed 2026-09-09.
\end{thebibliography}
